\theoremstyle{plain}
\newtheorem{teo}{Theorem}[section]
\newtheorem{prop}[teo]{Proposition}
\newtheorem{cor}[teo]{Corollary}
\newtheorem{lema}[teo]{Lemma}
\theoremstyle{definition}
\newtheorem{defs}[teo]{Definition}
\newtheorem{exm}[teo]{Example}
\newtheorem{rmk}[teo]{Remark}
\newtheorem{notc}{Notation}
\def\C{\mathbb{C}}
\def\PP{\mathbb{P}}
\def\F{\mathcal F}
\def\O{\mathcal O}
\def\-#1{\overline{#1}}
\def\^#1{\widehat{#1}}
\def\~#1{\widetilde{#1}}
\def\sing{\mathrm{sing}}
\begin{document}

\title[Unfoldings and structural theorems]{Isotrivial unfoldings and structural theorems for foliations on Projective spaces.}
\author{Federico Quallbrunn}
\address{Departamento de Matem\'atica, FCEyN, Universidad de Buenos Aires, Ciudad Universitaria, Pabell\'on 1, Buenos Aires (Argentina)}
\email{fquallb@dm.uba.ar}
\begin{abstract}
Following T. Suwa, we study unfoldings of algebraic foliations and their relationship with families of foliations, making focus on those unfoldings related to trivial families.
The results obtained in the study of unfoldings are then applied to obtain information on the structure of foliations on projective spaces.
\end{abstract}•

\maketitle

\section{Introduction}


The objective of this work is twofold.
\smallskip

In a first stance, we want to investigate the relation between unfoldings and families of foliations in algebraic varieties (see definitions below), specially those unfoldings related with trivial families.
In this respect our principal result is \cref{correspondencia}, which can be viewed as a generalization of \cite[chapter 7]{suwa}.
\smallskip

In a second stance, we apply our results on unfoldings and deformations to study foliations on $\PP^n$ whose degree is low with respect to $n$.
In this respect our main results are \cref{teopapa} and \cref{coropapa}.

The result of \cref{teopapa} is obtained here by considering unfoldings of foliations on $\PP^2$.\medskip
 
The concept of unfoldings of foliations and its relations with deformation theory of foliations were first introduced and studied by Suwa in a series of papers (see \cite{suwa81, suwa83, suwa} and references therein).

An unfolding of a foliation $\F_0$ on a variety $X$ is a "bigger" foliation $\F$ on a variety $X\times S$ whose leaves contains those of $\F_0$ (see below for a precise definition).
So we can always think of a foliation of codimension $q$ on $\PP^{n}$ as being birationally equivalent to an unfolding of a foliation by curves on $\PP^{q+1}$.
This, in general, does not provide us with much information, but under certain hypotheses we can control what kind of unfolding we may
consider.

The special kind of unfolding that will give us structural information on the foliation will be Isotrivial unfoldings.
Unfoldings may be thought of as special kinds of families of foliations, a family on which not only the differential equations defining the foliation vary continuously but also the \emph{solutions} i.e.: the leaves vary continuously as well (this is, of course, very vague, again, see bellow for precise definitions).
Isotrivial unfoldings are those that are related to trivial families of foliations, with this vague intuition, they are families in which the equations stays still while the solutions vary.
They were studied by Suwa in the infinitesimal case, see \cite{suwa}.

Here we generalize Suwa's results on isotrivial unfolding to be able to deal with unfoldings and families parametrized by arbitrary schemes.
It turns out that isotrivial unfoldings have a rather rigid structure and so being able to identify a foliation on $\PP^n$ with an isotrivial unfolding says something about the structure of the foliation. 

\bigskip
In \Cref{isotru} we give the principal definitions and state the theorems on unfoldings that will serve us as tools to conclude statements about foliations on $\PP^n$.
The main result is \cref{correspondencia} which can be viewed as a first step in determining the representation of the functor that to every scheme $S$ associates the unfoldings of a given foliation parametrized by $S$, although such a line of investigation will not be pursued in this work.

\medskip
In \Cref{unfPn} we explain how foliations on $\PP^n$ may be considered as giving rise to unfoldings of foliations by curves, and when
this unfoldings may be taken isotrivial.

\medskip
In \Cref{gentrans} we deal with the technical issue of transversality, which is a condition we need to have to be able to apply the results of \Cref{isotru}.

\medskip
Finally, in \Cref{ufc} we apply the results of the previous sections to conclude some structural statements on foliations on $\PP^n(\C)$.
In particular, \cref{teopapa} follows as a particular case of a statement (\cref{casiahi}) valid for arbitrary dimensional foliations, although restrictions on the degree are always required.

\bigskip
The author was supported by a post-doctoral grant of CONICET.
The author is grateful to Jorge Vitorio Pereira for useful suggestions, and for corrections of crucial mistakes in earlier versions of this paper.
Also gratitude is due to Ariel Molinuevo for introducing the author to the concept of unfolding of a foliation,
 and to the Algebraic Geometry Seminar of Buenos Aires for its nurturing atmosphere and overall support.

\section{Isotrivial unfoldings}\label{isotru}

In order to treat infinitesimal unfoldings and its relations with deformations of foliations we will need to consider non-reduced schemes, and generalize the notion of a foliation to this setting. 
Luckily, a straightforward generalization will serve our purposes just fine.

\begin{defs}
Let $\mathcal{X}$ be a (separated) scheme of finite type over an algebraically closed field $k$.
An \emph{involutive distribution} over $\mathcal{X}$ is a subsheaf $T\F\subseteq T\mathcal{X}$ that is closed under the Lie bracket,
i.e.: for every open subscheme $U\subset \mathcal{X}$ we have
 \[[T\F(U), T\F(U)]\subseteq T\F(U).\]
The annihilator of an involutive distribution $I(\F) := \mathrm{ann}(T\F)\subseteq \Omega^1_\mathcal{X}$ is an \emph{integrable Pfaff system}.
An integrable pfaff system $I$ verifies the equation 
\[d(I(\F))\wedge \bigwedge^rI(\F)=0\subset \Omega^{r+2}_\mathcal{X};\]
where $d:\Omega^j_\mathcal{X}\to \Omega^{j+1}_\mathcal{X}$ is the de Rham differential, and $r$ is the generic rank of the sheaf $\Omega^1_\mathcal{X}/I$. 
The generic rank of $T\F$ will be called the \emph{dimension} of the foliation and noted $\dim \F$. 
\end{defs}

Although there is no form of the Frobenius integrability theorem valid for any scheme (possibly non-reduced) of finite type over a field, we will still refer to the data of an integrable Pfaff system or an involutive distribution as a \emph{foliation}. 
Note that with this definition foliations may have singularities.

\begin{defs}[Pull-back of foliations]
Given a morphism $p:\mathcal{X}\to \mathcal{Y}$ and a foliation on $\mathcal{Y}$ defined by a Pfaff system $I(\F)\subseteq \Omega^1_\mathcal{Y}$.
By considering $p^*:\Omega^1_\mathcal{Y}(U)\to \Omega^1_\mathcal{X}(p^{-1}U)$ the pull-back of $1$-forms
 we define a foliation $p^*\F$ on $\mathcal{X}$ by defining its Pfaff system to be 
the one generated by the pull-backs of forms in $I(\F)$.
We call this the \emph{pull-back foliation} of the foliation $\F$.
\end{defs}

\begin{defs}[Unfolding] Let $X$ be a non-singular algebraic variety over an algebraically closed field and let $\F_0$ be a foliation on $X$.
Let $S$ be any scheme (of finite type over the base field of X) and $s\in S$ a closed point.
We denote by $\pi_1$ and $\pi_2$ the projections of $X\times S$ to $X$ and $S$ respectively.
We denote by $D\pi_2$ the differential map
\[  D\pi_2|_{(x.s)} : T(X\times S)\otimes k((x,s))\to TS\otimes k(s).\]
An \emph{unfolding} of $\F_0$ parametrized by $(S,s)$ is a foliation $\F$ on $X\times S$ such that
	\begin{enumerate}
	\item The restriction of $\F$ to $X\times s$ is $\F_0$ i.e.: if we take the pull-back foliation $\iota^*(\F)$ as in the above definition  we get $\iota^*(\F)=\F_0$, here $\iota:X\times s \to X\times S$ is the inclusion.
	\item Dimensions of $\F$ and $\F_0$ are related as thus: $\dim \F=\dim \F_0 + \dim S$.

	\end{enumerate}•
\end{defs}
In the case where $X$ and $S$ are non-singular varieties over $\C$, the leaves of the foliation $\F_0$ are contained in the larger dimensional leaves of the unfolding $\F$. 
\par 
\medskip
Now we remind the definition of the \emph{relative tangent sheaf}.
Given a morphism ${f: \mathcal{X}\to \mathcal{Y}}$ the relative tangent sheaf $T_f\mathcal{X}$ is the dual of $\Omega^1_{\mathcal{X}|\mathcal{Y}}$.
Remember that $T_f\mathcal{F}$ is naturally a sub-sheaf of the tangent sheaf $T\mathcal{X}$, its sections are the vector fields on $T\mathcal{X}$ that are tangent to the fibers of $f$.
In the case where $f=\pi_2:X\times S\to S$ is the projection we note $T_{\pi_2}(X\times S)=T_S(X\times S)$ and similarly with the other projection.
\begin{rmk}\label{sum}
In the case of the product of $X$ and $S$ we have the decomposition of sheaves.
\[ T(X\times S)\cong T_S(X\times S)\oplus T_X(X\times S).\]
Moreover we have
\[T_S(X\times S)\cong \pi_1^*(TX),\qquad T_X(X\times S)\cong \pi_2^*(TS),\]
where $\pi_1$ and $\pi_2$ are the projections.
\end{rmk}

\begin{rmk}\label{inducedfamily}
Let $\F_0$ be a foliation over a variety $X$ and $\F$ an unfolding of $\F_0$ parametrized by $(S,s_0)$.
Then $\F$ induces a family of foliations over $X$ parametrized by $S$ (see \cite{paper1})  in the following way:
\par Let $I(\F)$ be the Pfaff system associated with $\F$.
Let $p:\Omega^1_{X\times S}\to \Omega^1_{X\times S|S}$ be the projection from the sheaf of differential to the sheaf of relative differentials.
We set $I_S(\F):=p(I(\F))\subseteq  \Omega^1_{X\times S|S}$.
Note that $I_S(\F)$ is a family of \emph{integrable Pfaff systems} in the sense of \cite{paper1} such that its restriction to $s_0$ is $I(\F_0)$.
\par We can calculate the annihilator $T_S \F$ of $I_S(\F)$ which will be, of course, a family of \emph{involutive distributions}.
Indeed we obtain $T_S(\F)$ as the intersection of the subsheaves $T\F$ and $T_S(X\times S)$ of $T(X\times S)$.
\par 
\smallskip In general, given a family of foliations, it is not possible to glue the leaves of the different foliations in the family to higher dimensional leaves. 
So not every family of foliation comes from an unfolding, as a matter of fact, that is almost never the case.
\end{rmk}
 \begin{defs}[Isotriviality]
We say the unfolding $\F$ of $\F_0$ is \emph{isotrivial} if it induces a trivial family of Pfaff systems (equivalently of distributions), i.e.: if $I_S(\F)=\pi_1^*I(\F_0)$ as subsheaves of $\Omega^1_{X\times S|S}$, where ${\pi_1:X\times S\to X}$ is the projection and the morphism $\pi_1^*:\Omega^1_{X}\otimes \O_{X\times S}\to \Omega^1_{X\times S}$ is the pull-back of forms.
\end{defs}

\begin{defs}[Transversality]\label{trans}
Given a foliation $\F_0$ and an unfolding $\F$ of $\F_0$ parametrized by $(S,s_0)$ we have  exact sequences.

\[\xymatrix{
	&0 \ar[r] &T_S\F \ar[r] \ar@{^(->}[d] & T_S(X\times S) \ar@{^(->}[d] \ar[r] &N_S\F \ar@{^(->}[d] \ar[r] &0\\
	&0 \ar[r] &T\F \ar[r]  & T(X\times S)  \ar[r] &N\F  \ar[r] &0.\\
}
\]
Where $T_S\F$ is  defined as in \cref{inducedfamily}.
We say $\F$ is \emph{transversal} to $S$ if $N\F/N_S \F =0$.
\end{defs}

\begin{exm}
Let $X=\mathbb{A}^n=\mathrm{Spec}(k[x_1,\dots,x_n])$ and $S=\mathbb{A}^1=\mathrm{Spec}(k[y])$ be affine spaces.
Let $\F_0$ be a foliation on $\mathbb{A}^n$ with tangent sheaf $T\F_0=k[x_1,\dots,x_n]\cdot(X_1,\dots, X_r)$, the $X_i$'s being vector fields on $\mathbb{A}^n$.
\par Lets see what an isotrivial and transversal unfolding $\F$ of $\F_0$ parametrized by $\mathbb{A}^1$ should look like.

\smallskip In the first place, being isotrivial, the subsheaf $T_{\mathbb{A}^1}\F\subseteq T\F$ will be generated by the vector fields $X_1,\dots, X_r$, viewed as vector fields on $\mathbb{A}^n\times \mathbb{A}^1$.
So we can write $T\F$ as generated by vector fields $X_1,\dots, X_r, Y_1,\dots, Y_s$.
Now as $\F$ is an unfolding of $\F_0$, then $Y_1,\dots, Y_r$ must span a space of dimension $\leq 1$ on the tangent space to each point.
So we have 
\[T\F=k[x_1,\dots,x_n,y]\cdot(X_1,\dots, X_r, Y).\]
Moreover, as $\F$ is transversal over $\mathbb{A}^1$, we can write $Y=\~{Y}+\frac{\partial}{\partial y}$, where 
\[
\~{Y}=f_1(x,y)\frac{\partial}{\partial x_1}+\dots +f_n(x,y)\frac{\partial}{\partial x_n}.
\]
Then the involutivity of $T\F$ implies that, for each fixed $y_0\in \mathbb{A}^1$ we have a vector field $\~{Y}(-,y_0)$ on $\mathbb{A}^n$ that verifies
\[ [T\F_0, \~{Y}(-,y_0)]\subseteq T\F_0 .\]
Note that every family $\~{Y}$ of vector fields on $\mathbb{A}^n$ satisfying the above condition give rise to an isotrivial transversal unfolding of $\F_0$.

\smallskip
Essentially, the same is true for any isotrivial transversal unfolding of a foliation on a non-singular variety $X$, although the role of the family of vector fields $\~{Y}$ will be taken by a section of a certain sheaf on the parameter space $S$.
The rest of the section is devoted to the generalization of the above example.
\end{exm}

\begin{defs}
Remember that, given an involutive distribution $T\F$, the  bracket of vector fields defines a map $[-,-]:{T\F\otimes N\F}\to N\F$ known as \emph{Bott connection}. 
Using the Bott connection we will define a subsheaf of $N_S\F$, which we call $\mathfrak{u}(\F)$, as the subsheaf of $N_S\F$ whose local sections on an open set $V\subset X\times S$ are the following:
\[
\mathfrak{u}(\F)(V):=_{\mathrm{Def}}\{s\in N_S\F(V) \ \text{s.t.:}\ [T_S\F, s] = 0 \}.
\]
Note that $\mathfrak{u}(\F)$ is \emph{not} a coherent subsheaf of $N_S\F$ but only a subsheaf of $k$-vector spaces.
\end{defs}
\begin{notc}
 We denote $\Upsilon(\F_0):=_{\mathrm{Def}} \Gamma(X, \mathfrak{u}(\F_0))$.
\end{notc}

\begin{rmk}
The sheaf $\mathfrak{u}(\F)$ inherits from $T_S(X\times S)$ the structure of a sheaf of Lie algebras. 
Indeed, if $W\subseteq T_S(X\times S)$ is the preimage of  $\mathfrak{u}(\F)$ by the projection  $T_S(X\times S)\to N_S(\F)$, then $T_S\F$ is an ideal inside of $W$.
In particular $\Upsilon(\F)$ is a Lie algebra over the field of definition $k$.
\end{rmk}

The sheaf $\mathfrak{u}(\F)$ will be useful to study the relation between \emph{infinitesimal} unfoldings (i.e.: those parametrized by the Spec of artinian algebras) and unfoldings parametrized by varieties such as $\PP^1_k$.
\bigskip

Let us begin by considering an unfolding $\F$ of a foliation $\F_0$ parametrized by $S$, that is a foliation over $X\times S$.   
By \cref{sum} we have a projection $T(X\times S)\to T_S(X\times S)$, from this we get a projection
\[ 
T(X\times S)/T_S\F \to T_S(X\times S)/T_S\F = N_S\F.
\]

We will focus now on the properties of the map $\upsilon_{\F}$ one gets by composing
\begin{equation}\label{map}
T\F/T_S\F\to T(X\times S)/T_S\F \to N_S\F.
\end{equation}•
Note that, when the unfolding is transversal, we have an isomorphism $\pi_2^* TS\cong T\F/T_S\F$, so we can consider $\upsilon_\F$ to be a morphism 
\[ \upsilon_\F: \pi_2^* TS \to N_S\F. \]

\begin{prop}\label{upsilon}
If $\F$ is transversal to $S$ then $\upsilon_{\F}(\pi_2^{-1}TS)$ is a subsheaf of $\mathfrak{u}(\F)\subseteq N_S\F$.
\end{prop}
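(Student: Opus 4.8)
The plan is to reduce the assertion to a local computation with the Bott connection, using transversality to make $\upsilon_\F$ explicit and then playing the involutivity of $T\F$ against the fact that sections of the inverse image $\pi_2^{-1}TS$ are constant along the fibres of $\pi_2$. By \cref{sum} we have $T(X\times S) = T_S(X\times S)\oplus T_X(X\times S)$, and under transversality the isomorphism $\pi_2^* TS\cong T\F/T_S\F$ inverts the projection onto the summand $T_X(X\times S) = \pi_2^* TS$. So, given a local section $\partial$ of $\pi_2^{-1}TS$, I would choose a local field $Z\in T\F$ whose $T_X(X\times S)$-component equals $\partial$; then $\upsilon_\F(\partial)$ is represented modulo $T_S\F$ by the vertical part $Z-\partial\in T_S(X\times S)$. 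Proving $\upsilon_\F(\partial)\in\mathfrak{u}(\F)$ thus reduces to showing that $[v,\,Z-\partial]\in T\F$ for every local section $v$ of $T_S\F$; as this bracket is vertical, it is equivalent to ask that it lie in $T_S\F = T\F\cap T_S(X\times S)$.

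I would then expand $[v,\,Z-\partial] = [v,Z]-[v,\partial]$. Because $\partial$ is a section of $\pi_2^{-1}TS$ — and not merely of the full $\pi_2^* TS$ — it is pulled back from the base and has no dependence on the fibre directions; a direct coordinate check then gives $[\pi_2^{-1}TS,\,T_S(X\times S)]\subseteq T_S(X\times S)$, so $[v,\partial]$ is vertical. Consequently $[v,Z] = [v,\,Z-\partial]+[v,\partial]$ is a sum of two vertical fields, hence vertical; since it also lies in $T\F$ by involutivity of $T\F$, we obtain $[v,Z]\in T_S\F$, so it drops out modulo $T_S\F$. The whole statement therefore concentrates in the cross term $[v,\partial]$.

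The main obstacle is exactly to locate this vertical cross term $[v,\partial]$ inside $T_S\F$, i.e. to prove that the Bott connection $[T_S\F,\upsilon_\F(\partial)]$ vanishes in $N_S\F$; this is the crux, and it is where the involutivity of the ambient $T\F$ must be combined with the rigidity of the induced family $T_S\F$ in the directions coming from $\pi_2^{-1}TS$. The necessity of working with $\pi_2^{-1}TS$ rather than $\pi_2^* TS$ reappears here, since an $\O_{X\times S}$-linear rescaling of $\partial$ would destroy the fibre-constancy on which the bracket identities rest. Once the local statement is in hand, I would close by checking that the construction is independent of the auxiliary data — a different lift $Z$ or different local generators of $T_S\F$ alter the brackets only by elements of $T_S\F$, again by involutivity — so that $\upsilon_\F(\pi_2^{-1}TS)\subseteq\mathfrak{u}(\F)$ holds as an inclusion of subsheaves.
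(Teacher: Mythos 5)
Your first two paragraphs are correct and, modulo notation, coincide with the paper's own proof: your $Z$ is the paper's lift $\tilde{s}\in T\F$, your vertical part $Z-\partial$ is its $Y$, and your conclusion that $[v,Z]$ is vertical and lies in $T\F$, hence in $T_S\F=T\F\cap T_S(X\times S)$, is precisely the bracket computation $[W,\tilde{s}]=[W,Y]-Z(W)\in T_S\F$ carried out there. The problem is what comes next: you announce that the main obstacle is exactly to locate the vertical cross term $[v,\partial]$ inside $T_S\F$, and then never do it. This is not a reduction to a smaller statement --- it is a circle. Since $[v,Z]\in T_S\F$, you have $[v,Z-\partial]\equiv -[v,\partial] \pmod{T_S\F}$, so the claim that $[v,\partial]\in T_S\F$ for every local section $v$ of $T_S\F$ \emph{is} the assertion $\upsilon_\F(\partial)\in\mathfrak{u}(\F)$, verbatim. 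Everything you actually establish is the easy verticality bookkeeping; the entire content of the proposition is left as an acknowledged gap, so the proposal is not a proof.

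The gap is substantive, because $[v,\partial]=-\partial(v)$ is the derivative of $v$ in a base direction, and transversality alone does not make $T_S\F$ stable under such derivatives. Concretely, on $\mathbb{A}^2\times\mathbb{A}^1$ with coordinates $(x_1,x_2,t)$, let $T\F$ be generated by $v=\partial_{x_1}+t x_2\,\partial_{x_2}$ and $Z=\partial_t+x_1x_2\,\partial_{x_2}$. One checks $[v,Z]=0$, so this is a transversal (but non-isotrivial) unfolding of the foliation generated by $\partial_{x_1}$, with $T_S\F$ generated by $v$; here $[v,\partial_t]=-x_2\,\partial_{x_2}$, and the Bott bracket of $v$ with $\upsilon_\F(\partial_t)$ is represented by $[v,\,x_1x_2\partial_{x_2}]=x_2\,\partial_{x_2}$, which is not a section of $T_S\F$, so $\upsilon_\F(\partial_t)\notin\mathfrak{u}(\F)$. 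What rescues the statement is the isotriviality in force wherever the proposition is applied (\cref{correspondencia}): then $T_S\F=\pi_1^*T\F_0$ has local generators $v_0$ whose coefficients are pulled back from $X$, so $\partial(v_0)=0$, and the Leibniz rule gives $\partial(f v_0)=\partial(f)\,v_0\in T_S\F$; with that one observation your computation closes. (Your instinct about where the crux lies was sound: the paper's own proof elides exactly this term when it passes from $[W,\tilde{s}]\in T_S\F$ to $[T_S\F,s]=0$, and it too needs isotriviality at that step --- but flagging the crux is not the same as supplying it.)
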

\begin{proof}
Note that the statement is making reference to $\pi_2^{-1}TS\subset \pi_2^{*}TS$, that is the sheaf of vector fields that are constant along the fibers of $\pi_2$.
\par So, given a local section $s\in \upsilon_{\F}(\pi_2^{-1}TS)$ we need to compute $[T_S\F, s]$.
To do this we take a pre-image of $s$, say $\~{s}\in T\F$.
As $p_2:T(X\times S)\to \pi_2^*TS$ induces an isomorphism between $T\F/T_S\F$ and $ \pi_2^*TS$ and  $s\in \upsilon_{\F}(\pi_2^{-1}TS)$, we can take $\~{s}$ of the form $Y+Z$ with $Y\in \pi_1^*TX$ and $Z\in  \pi_1^{-1}TS$.
Given $W\in T_S\F$ we compute
\[ [W, \~{s}]= [W, Y+Z] = [W,Y]- Z(W),\] 
as $W(Z)=0$, being $Z$ in $\pi_1^{-1}TS$.
Then $[W, \~{s}]$ is in $\pi_1^*TX$, and also in $T\F$, so it is in $T_S\F$.
Therefore $[T_S\F,s]=0$.
\end{proof}

\begin{teo}\label{correspondencia}
Let $X$ be a non-singular variety and $\F_0$ a foliation on $X$.
\par There is, for each scheme $S$, a $1$ to $1$ correspondence:
\[
\left\{\begin{aligned}
\text{isotrivial transversal unfoldings } \\
\text{of $\F$ parametrized by $S$} 
\end{aligned}• \right\}\longleftrightarrow \left\{
\begin{aligned}
\text{sections } \upsilon \in H^0(S,\Omega^1_S)\otimes \Upsilon(\F_0)
\\ \text{ s.t.: } d\upsilon+\frac{1}{2} [\upsilon,\upsilon]=0
\end{aligned}
\right\}.
\]
\end{teo}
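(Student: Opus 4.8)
The plan is to read off the section $\upsilon$ directly from the morphism $\upsilon_\F$ of \eqref{map} and to reconstruct $\F$ from $\upsilon$ by ``twisting'' the trivial family by $\upsilon$, the two operations being mutually inverse. The first thing to set up is the dictionary that makes both sides comparable. Since the unfolding is isotrivial we have $I_S(\F)=\pi_1^*I(\F_0)$, hence $T_S\F=\pi_1^*T\F_0$ and $N_S\F\cong \pi_1^*N\F_0$ as sheaves on $X\times S$; computing the Bott connection along the fibres of $\pi_2$ then identifies $\pi_{2*}\mathfrak{u}(\F)$ with $\O_S\otimes_k\Upsilon(\F_0)$. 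Under this identification an $\O_S$-linear map $TS\to \pi_{2*}\mathfrak{u}(\F)$ is the same datum as a global section of $\Omega^1_S\otimes_k\Upsilon(\F_0)$, i.e.\ an element of $H^0(S,\Omega^1_S)\otimes_k\Upsilon(\F_0)$ (here one uses that $\Upsilon(\F_0)$ is flat over $k$). This is the purely formal part of the correspondence, and I would dispatch it first.

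For the forward map, start with an isotrivial transversal unfolding $\F$. Transversality gives $\pi_2^*TS\cong T\F/T_S\F$, so $\upsilon_\F$ of \eqref{map} is defined, and \cref{upsilon} guarantees that its restriction to the subsheaf $\pi_2^{-1}TS$ of fibrewise-constant fields lands in $\mathfrak{u}(\F)$. Because $\upsilon_\F$ is $\O_{X\times S}$-linear and $\pi_2^{-1}TS$ is a $\pi_2^{-1}\O_S$-module, this restriction is $\O_S$-linear, so by the dictionary above it produces the desired section $\upsilon\in H^0(S,\Omega^1_S)\otimes_k\Upsilon(\F_0)$.

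For the inverse map, given such a $\upsilon$ I would use the canonical splitting $T(X\times S)\cong \pi_1^*TX\oplus \pi_2^*TS$ of \cref{sum}. For a local field $\xi$ on $S$ let $\hat\xi\in\pi_2^*TS$ be its horizontal lift and set $\tilde\xi:=\hat\xi+\sigma(\upsilon(\xi))$, where $\sigma(\upsilon(\xi))\in\pi_1^*TX$ is any representative of $\upsilon(\xi)\in N_S\F$; this is well defined modulo $T_S\F$. Define $T\F$ to be the subsheaf of $T(X\times S)$ generated by $T_S\F=\pi_1^*T\F_0$ and by all the $\tilde\xi$. By construction its restriction to $X\times s_0$ is $\F_0$, its dimension is $\dim\F_0+\dim S$, it is isotrivial since $T_S\F=\pi_1^*T\F_0$, and it is transversal since $T\F/T_S\F\cong\pi_2^*TS$; the only nontrivial point is involutivity.

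Involutivity is exactly where the Maurer--Cartan equation enters, and this is the step I expect to be the \emph{main obstacle}. The brackets $[T_S\F,T_S\F]$ close up by involutivity of $\F_0$, and $[T_S\F,\tilde\xi]$ lands in $T_S\F$ precisely because $\upsilon(\xi)\in\mathfrak{u}(\F)$, i.e.\ $[T_S\F,\upsilon(\xi)]=0$. The remaining bracket, with $\partial_\xi$ denoting differentiation along the horizontal lift of $\xi$, is computed as
\[
[\tilde\xi,\tilde\eta]-\widetilde{[\xi,\eta]}\equiv \bigl(\partial_\xi\upsilon(\eta)-\partial_\eta\upsilon(\xi)-\upsilon([\xi,\eta])\bigr)+[\upsilon(\xi),\upsilon(\eta)] \pmod{T_S\F},
\]
an element of $\pi_1^*TX$; requiring it to lie in $T_S\F$ amounts to requiring its class in $N_S\F$ to vanish, and that class is exactly $\bigl(d\upsilon+\tfrac12[\upsilon,\upsilon]\bigr)(\xi,\eta)$. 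Thus $T\F$ is involutive if and only if $\upsilon$ satisfies $d\upsilon+\tfrac12[\upsilon,\upsilon]=0$, which running the same computation backward simultaneously shows that the $\upsilon$ extracted in the forward direction satisfies Maurer--Cartan. Finally I would check that the two constructions are mutually inverse: $\upsilon_\F$ of the $\F$ built from $\upsilon$ returns $\upsilon$, and recovering the $\tilde\xi$ from $\upsilon_\F$ rebuilds the original $T\F$ thanks to the transversality isomorphism $T\F/T_S\F\cong\pi_2^*TS$.
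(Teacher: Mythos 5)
Your proposal is correct and takes essentially the same route as the paper: your forward map is exactly the paper's $\upsilon_\F$ from \eqref{map} combined with \cref{upsilon} and the isotriviality/K\"unneth identification, and your inverse is the paper's graph construction $\phi^{-1}(\upsilon\oplus\mathrm{id})$, with involutivity tied to the Maurer--Cartan equation by the identical bracket computation (which the paper runs twice, in \cref{MCunf} and in the involutivity check, and you sensibly merge into one ``if and only if''). The only cosmetic differences are your $\pi_{2*}$-pushforward phrasing of the dictionary and your explicit verification that $[T_S\F,\tilde\xi]\subseteq T_S\F$ via $[T_S\F,\upsilon(\xi)]=0$, a point the paper leaves implicit.
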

\begin{proof}
 \hfill
\begin{description}
\item[1.Form associated to an unfolding]
Given an isotrivial transversal unfolding $\F$ of $\F_0$ we associate to it the map
\[ T\F/T_S\F \to N_S\F, \]
 as in  \cref{map}.
As $\F$ is transversal we have $T\F/T_S\F \cong T_X(X\times S)\cong \pi_2^*(TS)$.
So we have a map
 \[ \upsilon_\F: \pi_2^*(TS)\to N_S\F. \]
By \cref{upsilon}, the image of $ \pi_2^{-1}(TS)$ under $\upsilon_\F$ is a subsheaf of $\mathfrak{u}(\F)$.
Now, $\F$ being isotrivial implies that $N_S\F\cong\pi^*_1(N\F_0)$ and that $\mathfrak{u}(\F)\cong\pi_1^*(\mathfrak{u}(\F_0))$.

\smallskip  Then we have a global section $\upsilon_\F \in H^0(\pi_2^*\Omega^1_S\otimes \pi_1^*(N\F_0))$, by K\"unneth isomorphism this is a section in $H^0(\Omega^1_S)\otimes H^0(N\F_0)$ and by \cref{upsilon} this section actually belongs to  $H^0(\Omega^1_S)\otimes H^0(\mathfrak{u}(\F_0))$. 
So we get a global form 
\[ \upsilon_\F\in H^0(S,\Omega^1_S)\otimes \Upsilon(\F_0) \]
 associated with an isotrivial transversal unfolding $\F$ parametrized by $S$.

Now to establish the first part of the correspondence we need the following:
\begin{lema}\label{MCunf}
Let $\F_0$ be a foliation on a nonsingular variety $X$, and $\F$ an isotrivial and transversal unfolding of $\F_0$ parametrized by a variety $S$. 
The global $1$-form $\upsilon_\F$ verifies the Maurer-Cartan equation:
\[ d \upsilon_\F + \frac{1}{2} [\upsilon_F, \upsilon_F]=0. \]
\end{lema}
\begin{proof}[Proof of \cref{MCunf}]
Set $\upsilon=\upsilon_F$,  now given two vector fields $Y$ and $Z$ defined on $S$ we use Cartan's formula for the differential
\[ 
d\upsilon(Y, Z)= Y(\upsilon(Z)) -Z  (\upsilon(Y)) - \upsilon ([Y, Z]).
\]
Now we want to calculate $\upsilon ([Y, Z])$, for this we look at the definition of $\upsilon_\F$.\newline
Given a local section $Y$ of $TS$ we look at it as a section of $\pi^*(TS)$.
As was noted above, transversality of $\F$ gives us an isomorphism $T\F/T_S \F\cong \pi_2^*(TS)$, so we can associate to $Y$ a corresponding global section of $T\F/T_S\F$.

\smallskip
We then apply the restriction of the morphism $p_1: T(X\times S)\to \pi_1^*(TX)$ of \cref{sum} to the above section obtaining by \cref{upsilon} a section of $\mathfrak{u}(\F)$.

\smallskip
So $\widetilde{Y}$ viewed as a section of $T(X\times S)$ is of the form $Y+Y'$, with $Y'$ in $\pi^*_1(\mathfrak{u}(\F))$. 
Then we have $Y'=\upsilon(Y)$.

\smallskip
To calculate  $\upsilon ([Y, Z])$ we first observe that 
\[ 
\~{[Y,Z]}= [\~{Y}, \~{Z}].
\]
Indeed, the isomorphism between  $T\F/T_S$ and $ \pi_2^*(TS)$ that we use to define $\~{Y}$ comes from the inclusion of $T\F$ in $T(X\times S)$, so it respects Lie brackets.
So now $\upsilon ([Y, Z])$ is simply $p_1([\~{Y}, \~{Z}])$.

\smallskip
We can write $\~{Y}= Y+Y'$ and similarly with $Z$ and, noting that $p_1([Y,Z])=0$, compute
\begin{eqnarray*}
p_1([\~{Y}, \~{Z}])&=& p_1([Y+Y', Z+Z'])\\
 &=& [Y', Z'] + p_1([Y, Z']) -  p_1([Z, Y']) .
\end{eqnarray*}•
Now, as $ p_1([Y, Z'])= Y(Z')$ we have
\[
\upsilon([Y,Z])= [\upsilon(Y), \upsilon(Z)] + Y(\upsilon(Z)) - Z( \upsilon(Y)),
\]
from which the Maurer-Cartan equation, and therefore the lemma, follows.
\end{proof}

\item[2.Unfolding associated with a form] Given $\upsilon\in H^0(S,\Omega^1_S)\otimes \Upsilon(\F_0)$ we have an associated morphism
\[
\upsilon: \pi_2^*(TS)\to \pi^*_1(N\F_0).
\]
So we consider the morphism 
\[\upsilon\oplus \mathrm{id}:\pi_2^*(TS)\to \pi^*_1(N\F_0)\oplus \pi_2^*(TS)\]
given by $\upsilon\oplus \mathrm{id} (s)= (\upsilon(s), s)$.
We also have the projection $\phi: T(X\times S)\to \pi^*_1(N\F_0)\oplus \pi_2^*(TS)$.
So we consider the diagram
\[ 
\pi_2^*(TS)\xrightarrow{\upsilon\oplus \mathrm{id}} \pi^*_1(N\F_0)\oplus \pi_2^*(TS)\xleftarrow{\phi}T(X\times S).
\]

We then take $T\F_\upsilon\subseteq T(X\times S)$ to be the sub-sheaf generated by $\phi^{-1}(\upsilon(\pi_2^*(TS)))$.

\smallskip The fact that the image of $\upsilon$ is within $\Upsilon(\F_0)$ and that $\upsilon$ satisfies the Maurer-Cartan equation implies that $T\F_\upsilon$ is involutive.

Indeed, let $\~{Y}$ and $\~{Z}$ be local sections of $T\F_\upsilon$, we need to check that $[\~{Y},\~{Z}]$ is also a section of $T\F_\upsilon$.
So we take  have section $Y$ and $Z$ in $\pi_2^*(TS)$ such that 
\begin{eqnarray*}
\phi(\~{Y}) & = & (\upsilon(Y),Y)\\
\phi(\~{Z}) & = & (\upsilon(Z),Z)).
\end{eqnarray*}•
Moreover, we may assume that $Z$ and $Y$ are local sections of  $\pi_2^{-1}(TS)$, as this latter sheaf generates $\pi_2^*(TS)$ and the general result will follow from the fact that the Lie bracket is a derivation on both of its inputs.
So, given that $Y,\ Z\in \pi_2^{-1}(TS)$ we have
\begin{eqnarray*}
\phi([\~{Y},\~{Z}]) & = & [\phi(\~{Y}),\phi(\~{Z})]=\\
 & = & [Y,Z] + Y(\upsilon(Z)) - Z(\upsilon(Y)) + [\upsilon(Y), \upsilon(Z)]=\\
 & = & [Y,Z] + \upsilon([Y, Z]).
\end{eqnarray*}•
So, $T\F_\upsilon$ is involutive.

Also, by construction, $\F_\upsilon$ induces a trivial family.
Hence $\F_\upsilon$ is an isotrivial, transversal unfolding of $\F_0$, associated to an $\upsilon\in  H^0(S,\Omega^1_S)\otimes \Upsilon(\F_0)$.

 \end{description}
\medskip
It follows routinely that the constructions of $\upsilon_F$ and of $\F_\upsilon$ are inverse to each other.
\end{proof}

We thus see that the space $\Upsilon(\F)$ will be an important ingredient in studying isotrivial unfoldings of a foliation $\F$. 
This space is acted upon by the group of automorphism of the foliation, that is the group
\[
\mathrm{Aut}(\F):= \{g\in \mathrm{Aut}(X) \text{ s.t.: } g_*(T\F)=T\F\}.
\]
The Lie algebra of this group may be naturally identified with the global sections of the sheaf $\mathfrak{aut}(\F)$ whose local sections are
\[
\mathfrak{aut}(\F)(V) :=\{ \theta \in TX(V) \text{ s.t.: } [T\F, \theta]\subseteq T\F \},
\]
so $Lie(\mathrm{Aut}(\F))= H^0(X, \mathfrak{aut}(\F))$.

\begin{rmk}\label{unfmc}
Note also that we have a short exact sequence of sheaves
\[
0\to T\F \to \mathfrak{aut}(\F) \to \mathfrak{u}(\F)\to 0.
\]

In the particular case when $H^0(X,T\F)=H^1(X,T\F)=0$, which will be important to us later, we have the equality
\begin{equation}
Lie(\mathrm{Aut}(\F))= H^0(X, \mathfrak{aut}(\F))=H^0(X,  \mathfrak{u}(\F))= \Upsilon(\F).
\end{equation}•

In particular, in the case where $X$ a complex variety and $H^0(X,T\F)=H^1(X,T\F)=0$,  there is an unfolding associated to the Maurer-Cartan form of the group $\mathrm{Aut}(\F)$, which in this case take values in $\Upsilon(\F)$. 
So there is, by \cref{correspondencia}, an unfolding associated to the Maurer-Cartan form.
\end{rmk}

In the situation where $X$ and $S$ be varieties over $\C$, $\F_0$ a foliation on $X$ such that $H^0(X,T\F_0)=H^1(X,T\F_0)=0$ and $\F$ a transversal isotrivial unfolding of $\F_0$ parametrized by $S$.
Denote $\pi:\~{S}\to S$ the universal covering of $S$, $\upsilon_{MC}$ the Maurer-Cartan form of $\mathrm{Aut}(\F_0)$.
Let $\F_{MC}$ the unfolding associated to the Maurer-Cartan form (cf.: \cref{unfmc}), and $\upsilon\in \Omega^1_{\~{S}}\otimes \Upsilon(\F_0)$ the pull-back of  $\upsilon_\F$ by the universal covering map.
A direct application of Darboux's existence theorem gives us: 

\begin{cor}\label{darboux}
With hypotheses as in the above paragraph, there is a morphism $f:\~{S}\to \mathrm{Aut}(\F_0)$ such that $\upsilon$ is the pull-back of $\upsilon_{MC}$.
Equivalently, $f^*(\F_{MC})=\pi^*(\F)$ as unfoldings of $\F_0$.
\end{cor}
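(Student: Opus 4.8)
The plan is to read the statement as the Lie-group (nonabelian) form of Darboux's existence theorem, i.e.\ the classical converse to the Maurer--Cartan equation, and to check that its hypotheses are met; the equivalence of the two formulations is then a formal consequence of \cref{correspondencia}. The three things to assemble are the target group, the source manifold, and the integrability of $\upsilon$. For the target, the vanishing $H^0(X,T\F_0)=H^1(X,T\F_0)=0$ lets \cref{unfmc} identify $\Upsilon(\F_0)$ with $Lie(\mathrm{Aut}(\F_0))$, so $\upsilon$ is genuinely a $Lie(\mathrm{Aut}(\F_0))$-valued $1$-form. For the source, $\~{S}$ is simply connected, being a universal covering of $S$. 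For integrability, \cref{MCunf} gives $d\upsilon_\F+\tfrac12[\upsilon_\F,\upsilon_\F]=0$ on $S$, and since the de Rham differential and the bracket on $\Upsilon(\F_0)$ commute with $\pi^*$, the pull-back $\upsilon=\pi^*\upsilon_\F$ satisfies the same Maurer--Cartan equation on $\~{S}$.

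With these in place I would invoke Darboux's theorem directly: on the simply connected manifold $\~{S}$, any $Lie(\mathrm{Aut}(\F_0))$-valued $1$-form satisfying Maurer--Cartan is the pull-back of the Maurer--Cartan form $\upsilon_{MC}$ of $G=\mathrm{Aut}(\F_0)$ along a map $f:\~{S}\to G$, unique up to a left translation by a constant element of $G$. This yields the desired $f$ with $f^*\upsilon_{MC}=\upsilon$ and establishes the first assertion.

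For the equivalent formulation I would use the naturality of the correspondence of \cref{correspondencia} under base change. Because $T\F_\upsilon$ is built fibrewise from $\upsilon$ as the subsheaf generated by $\phi^{-1}(\upsilon(\pi_2^*TS))$, the construction commutes with pull-back: for a morphism $g\colon S'\to S$ the unfolding attached to $g^*\upsilon$ is the pull-back of the unfolding attached to $\upsilon$. Applying this to $f$ and to $\pi$, the unfolding over $X\times\~{S}$ whose associated form is $\upsilon$ is on one side $f^*(\F_{MC})$ (since $\F_{MC}$ corresponds to $\upsilon_{MC}$ and $f^*\upsilon_{MC}=\upsilon$) and on the other $\pi^*(\F)$ (since $\F$ corresponds to $\upsilon_\F$ and $\pi^*\upsilon_\F=\upsilon$); as the correspondence is a bijection, $f^*(\F_{MC})=\pi^*(\F)$.

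The main obstacle is not Darboux's theorem itself but the transition to the analytic category, since $\~{S}$ is a complex manifold and not a variety: one must work with the holomorphic analogue of \cref{correspondencia} and confirm that $\mathrm{Aut}(\F_0)$ is an honest finite-dimensional complex Lie group with Lie algebra $\Upsilon(\F_0)$ --- which is precisely what \cref{unfmc} supplies under the vanishing hypotheses. Checking the base-change naturality of the correspondence carefully in this holomorphic setting is the only step that demands real attention.
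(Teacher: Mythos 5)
Your proposal is correct and takes essentially the same route as the paper, which proves the corollary by exactly the one-line observation you expand upon: a direct application of Darboux's existence theorem, using the identification $\Upsilon(\F_0)=Lie(\mathrm{Aut}(\F_0))$ from \cref{unfmc}, the Maurer--Cartan equation from \cref{MCunf}, simple connectedness of $\~{S}$, and the correspondence of \cref{correspondencia} for the reformulation $f^*(\F_{MC})=\pi^*(\F)$. Your careful spelling out of the base-change naturality and the passage to the holomorphic category fills in details the paper leaves implicit, but the underlying argument is the same.
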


\begin{exm}\label{exP2}
Let $\F_0$ be a foliation by curves on the complex projective plane $\PP^2(\C)$ such that  $H^0(X, \mathfrak{aut}(\F))\neq 0$.
Then $T\F_0$ is a line bundle on $\PP^2$, so $H^0(X,T\F_0)=H^1(X,T\F_0)=0$. 
Also, being a foliation by curves in $\PP^2$ implies that $\dim \mathrm{Aut}(\F_0) \leq 1$.
So there is an infinitesimal symmetry $Y$ such that $H^0(X, \mathfrak{aut}(\F))=(Y)$ and 
\[ \mathrm{Aut}(\F_0)^0=\exp(tY)\cong \C^*,\]
where $\mathrm{Aut}(\F_0)^0$ is the connected component of the identity.
In particular, we have that the universal covering $A$ of $\mathrm{Aut}(\F_0)^0$ is isomorphic as a complex variety to $\C$.
On $A$ we have the Maurer-Cartan form 
\[ \upsilon_{MC}=dz\otimes Y, \]
here we are taking $z$ to be a coordinate of $A\cong \C$ and we are identifying $\mathrm{Lie}(A)=\mathrm{Lie}(\mathrm{Aut}(\F_0))=(Y)$.

\smallskip
Considering that $\mathrm{Lie}(\mathrm{Aut}(\F_0))=\Upsilon(\F_0)$ and applying \cref{correspondencia}, this gives us the unfolding
$\F$ such that:
\[
T\F=(\pi_1^*T\F_0\oplus (\frac{\partial}{\partial z} + Y))
\]
on $\PP^2\times \C$.
If $\omega$ is the rational $1$-form on $\PP^2$ annihilating $T\F_0$, then $1$-form annihilating $T\F$ is 
\[ \varpi = \omega+ \omega(Y)dz\in \Omega^1_{\PP^2\times \C}. \]
so $\F$ posses the integrating factor $\omega(Y)$ (considered as a rational function on $\PP^2\times \C$).
Note that $\omega(Y)$ considered as a rational function on $\PP^2$ is an integrating factor for $\F_0$.
\end{exm}

\begin{rmk}
In a sense, the previous results generalize those of Suwa in \cite{suwa}.
There is proven, in the context of codimension $1$ foliations on $\PP^n$, a correspondence between infinitesimal isotrivial unfoldings (i.e.: isotrivial unfoldings parametrized by $k[x]/(x^2)$) and rational integrating factors of the foliation.
In our context this is understood the following way:
\par An infinitesimal isotrivial unfolding of a foliation $F$ will be transversal on some open set $\iota:U\hookrightarrow \PP^n$.
As we have seen, a transversal isotrivial unfolding of $\iota^*F$ give rise to a global section $s\in \Upsilon(\iota^*F)$. 
Restricting the open set $U$ further if needed we can take a section $Y$ in $Lie(\mathrm{Aut}(\iota^*F))$ representing $s$ modulo $T\iota^*F$.
In other words $Y$ is a \emph{rational symmetry} of $F$.
It is well known, see \cite{jvp}, that to every rational symmetry of a codimension $1$ foliation in a complete variety corresponds a rational integrating factor.
Thus we can recover Suwa's theorem from \cite{suwa}.
\end{rmk}


\section{Foliations on $\PP^n$ viewed as unfoldings}\label{unfPn}

In this section we will be interested in foliations on $\PP^n(\C)$ up to birational equivalence.
Moreover we will study the relations of foliations of arbitrary dimension on $\PP^n(\C)$ with foliations by curves on projective spaces of lower dimension.
First we recall an important definition.

\begin{defs}
A codimension $q$ foliation $\F$ on $\PP^n$ is said to be of \emph{degree} $d$ if and only if the associated integrable Pfaff system $I(\F)$ have the property that
\[
\wedge^q I(\F)\cong \O_{\PP^n}(-d-q-1). 
\]
\end{defs}

Given a foliation $\F$ on $\PP^n(\C)$ of codimension $q$, we fix a (rational, linear) projection $p:\PP^n\dashrightarrow \PP^{q+1}$.
Now, let $\mathrm{Gr}^{n}_{q+1}$ be the Grassmannian of $q+1$-dimensional linear spaces on $\PP^n$, define the open set
\[ 
U = \{ P\in \mathrm{Gr}^{n}_{q+1} \text{ s.t.: }  p|_P: P\to\PP^{q+1}\text{ is a (regular) isomorphism} \}\subset \mathrm{Gr}^{n}_{q+1} .
\]
Then we have for every $P\in U$ a foliation in $\PP^{q+1}$ given by first restricting $\F$ to $P$ and then applying the isomorphism $p$.
Note that if $\F$ is of degree $d$ so is every foliation on $\PP^{q+1}$ obtained this way.
\par 
In other words, what we are doing here is considering the incidence correspondence $Z=\{(x,P)\text{ s.t.: } x\in P\}\subset \PP^n\times \mathrm{Gr}^{n}_{q+1}$, intersecting with $\PP^n\times U$ gives us 
\[
Z\cap (\PP^n\times U) \cong \PP^{q+1}\times U
\]
and hence we have a diagram
\[
\xymatrix{
 & \PP^{q+1}\times U \ar[dl]_{\pi_1}\ar[dr]^{\pi_2} & \\
\PP^n&  &U.
}
\]
So taking the pull-back of $\F$ we have $\pi_1^*\F$ as a foliation on $ \PP^{q+1}\times U$.
Now, we can take the \emph{family of foliations over $U$} induced by $\pi_1^*\F$, as in \Cref{isotru}.
Restricting $U$ if necessary, we may assume that $\pi_1^*\F$ induces  a \emph{flat} family of involutive distributions over $\PP^{q+1}$, parametrized by $U$, lets call this family $F_U$.
We can characterize $F_U$ as follows, to each point $u\in U$ corresponds a ${q+1}-$linear space $P$ and $\pi_2$ is the projection from the incidence correspondence, so 
\[
F_U|_{\PP^{q+1}\times \{u\}}=p(\F|_P).
\]
By the results of \cite[Proposition 6.3]{paper1} such a family defines a morphism between $U$ and the moduli space of involutive distributions over $\PP^{q+1}$ of codimension $q$ and degree $d$.
This later space is  $\PP H^0(\PP^{q+1}, T\PP^{q+1}(d-1))$.

Then we have a morphism
\[
\phi_\F: U\to \PP H^0(\PP^{q+1}, T\PP^{q+1}(d-1)),
\]
the later being a projective space of dimension $(q+2)\binom{d+q+1}{d}-\binom{d+q}{d-1}$.
In particular if the dimension of $U$ (which is that of  $\mathrm{Gr}^{n}_{q+1}$) is greater than that of the target space, the morphism will have fibers of positive dimension.

\begin{rmk}\label{remfib}
Suppose the dimension of  $\mathrm{Gr}^{n}_{q+1}$ is greater than the dimension of $ \PP H^0(\PP^{q+1}, T\PP^{q+1}(d-1))$.
Set $V$ to be a fiber of $\phi_\F$, so $\phi_\F^{-1}(x)=V\subseteq U$, note that $V$ must have positive dimension. 
Then it follows from the universal property of the moduli space that the family $F_U|_V$ is trivial.
In particular the foliation $\pi_1^*\F|_{\PP^{q+1}\times V}$ defines an isotrivial unfolding parametrized by $V$.
\end{rmk}

\section{Generic transversality}\label{gentrans}

Now we investigate conditions under which an isotrivial unfolding turns out to be also transversal, so we can apply to it the theory of \Cref{isotru}.

\begin{defs}
Let $\F$ be a foliation on $X\times S$ viewed as an unfolding of foliations on $X$. We define $\mathscr{T}_\F$ to be the schematic support of the sheaf $N\F/N_S\F$.
\end{defs}

The subscheme $\mathscr{T}_\F$ will be of interest as it is the locus of points where transversality fails.

Recall that $\sing(\F)$, the singular locus of a foliation $\F$ on a scheme $\mathcal{X}$, is defined to be the schematic support of the sheaf $\mathcal{E}xt^1_\mathcal{X}(N\F,\O_{\mathcal{X}})$ (local Ext).
Similarly, if we have a family of involutive distributions parametrized by a scheme $S$ its singular locus is the scheme theoretic support of the sheaf $\mathcal{E}xt^1_\mathcal{X}(N_S\F,\O_{X\times S})$.

\begin{lema}
Let $X$ and $S$ be non-singular varieties.
Let $\F$ be an unfolding of foliations on $X$ parametrized by $S$. 
Suppose that $\F$ is an isotrivial unfolding of a foliation $\F_0$ on $X$, and that  $\mathscr{T}_\F \cap \mathrm{sing}(\F)=\emptyset$. 
Then $\mathscr{T}_\F\subseteq \sing(\F_0)\times S$.
\end{lema}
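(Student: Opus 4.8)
The plan is to reduce the statement to a pointwise local computation with the Pfaff system and to exploit isotriviality through the lifting of a local coframe. Since $\mathscr{T}_\F$ is the support of $N\F/N_S\F$, it is enough to show, working set-theoretically, that every closed point $w\in\mathscr{T}_\F$ satisfies $\pi_1(w)\in\sing(\F_0)$. Suppose not, and let $w=(x,s)$ with $x\notin\sing(\F_0)$. By the hypothesis $\mathscr{T}_\F\cap\sing(\F)=\emptyset$ the foliation $\F$ is non-singular at $w$, while $\F_0$ is non-singular at $x$. Transversality at $w$ means precisely that the natural map $\psi\colon T\F\hookrightarrow T(X\times S)\twoheadrightarrow\pi_2^*TS$ is surjective at $w$, because $N\F/N_S\F\cong\operatorname{coker}\psi$. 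So the whole statement follows once I prove the local claim: an isotrivial unfolding that is non-singular at $w$, over a point $x$ where $\F_0$ is non-singular, is transversal at $w$; this contradicts $w\in\mathscr{T}_\F$.

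To prove the local claim I would pass to the conormal side and use the projection $p\colon\Omega^1_{X\times S}\to\pi_1^*\Omega^1_X=\Omega^1_{X\times S|S}$, whose kernel is $\pi_2^*\Omega^1_S$. Because $\F_0$ is non-singular at $x$, the system $I(\F_0)$ is a rank-$q$ subbundle near $x$; fix a local frame $\eta_1,\dots,\eta_q$ of $\pi_1^*I(\F_0)$. Isotriviality gives $p(I(\F))=I_S(\F)=\pi_1^*I(\F_0)=\langle\eta_1,\dots,\eta_q\rangle$, so I may lift the frame into $I(\F)$: choose $\omega_l\in I(\F)$ with $p(\omega_l)=\eta_l$, that is $\omega_l=\eta_l+\theta_l$ with $\theta_l\in\pi_2^*\Omega^1_S$. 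The subsheaf $I':=\langle\omega_1,\dots,\omega_q\rangle\subseteq I(\F)$ is locally free of rank $q$, and its $q$ generators are fibrewise linearly independent near $w$ (a fibrewise relation would project under $p$ to a relation among the $\eta_l$), hence $I'$ is a subbundle of $\Omega^1_{X\times S}$.

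Now I would upgrade the inclusion $I'\subseteq I(\F)$ to an equality using non-singularity of $\F$. Since $w\notin\sing(\F)=\operatorname{supp}\mathcal{E}xt^1(N\F,\O_{X\times S})$, the sheaf $N\F$ is locally free of rank $q$ near $w$, hence so is $I(\F)=\mathcal{H}om(N\F,\O_{X\times S})$, and it too is a subbundle of $\Omega^1_{X\times S}$. Two subbundles of the same rank $q$ with $I'\subseteq I(\F)$ must coincide, so $I(\F)=\langle\eta_1+\theta_1,\dots,\eta_q+\theta_q\rangle$ near $w$. It remains to read off transversality: given a tangent vector $v$ to $S$ at $s$, lift it to $\hat v\in T(X\times S)$ with vanishing $X$-component; since $\eta_l(\hat v)=0$ and $\theta_l$ kills $\pi_1^*TX$, the correction equations $\eta_l(u)=-\theta_l(\hat v)$ for $l=1,\dots,q$ are solvable for $u\in\pi_1^*TX$ because $u\mapsto(\eta_l(u))_l$ is the surjection $TX\to N\F_0$ at the non-singular point $x$. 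Then $W=\hat v+u\in T\F$ and $\psi(W)=v$, so $\psi$ is surjective at $w$, i.e. $(N\F/N_S\F)_w=0$, contradicting $w\in\mathscr{T}_\F$.

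I expect the main obstacle to be the step of the previous paragraph in which non-singularity of $\F$ forces the a priori larger system $I(\F)$ to be generated by the lifted coframe, equivalently kills the ``vertical'' part $I(\F)\cap\pi_2^*\Omega^1_S$ that is responsible for the failure of transversality; this is exactly where the hypothesis $\mathscr{T}_\F\cap\sing(\F)=\emptyset$ enters. The frame computation itself rests on isotriviality in an essential way, since it is isotriviality that makes $p(I(\F))$ equal to the full pulled-back subbundle $\pi_1^*I(\F_0)$ and thus allows a complete coframe to be lifted into $I(\F)$. The one genuinely delicate sheaf-theoretic point is the identification underlying non-singularity, namely deducing local freeness of $N\F$ --- hence of $I(\F)=\mathcal{H}om(N\F,\O_{X\times S})$ as a rank-$q$ subbundle --- from the vanishing of $\mathcal{E}xt^1(N\F,\O_{X\times S})$ at $w$; making this precise requires a little care with the reflexivity (or saturation) of the foliation, and is the place I would be most careful.
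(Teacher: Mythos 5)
Your proof is correct, and it reaches the paper's conclusion by a genuinely dual route. The paper argues entirely on the tangent side: at a point $p$ outside both $\sing(\F_0)\times S$ and $\sing(\F)$, the fibers of the two exact sequences (for $T_S\F$ and for $T\F$) remain exact, which yields an injection $T_S\F\otimes k(p)\hookrightarrow T\F\otimes k(p)$; a dimension count using the unfolding condition $\dim\F=\dim\F_0+\dim S$ then shows that $T\F/T_S\F\otimes k(p)$ already fills the $\dim S$-dimensional quotient $T_X(X\times S)\otimes k(p)$, so $(N\F/N_S\F)\otimes k(p)=0$ and $p\notin\mathscr{T}_\F$. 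You work instead on the conormal side: isotriviality (which for the paper only enters by identifying the singular locus of the induced family with $\sing(\F_0)\times S$) is exploited directly to lift a local coframe $\eta_1,\dots,\eta_q$ of $\pi_1^*I(\F_0)$ to generators $\omega_l=\eta_l+\theta_l$ of $I(\F)$, non-singularity of $\F$ upgrading $I'\subseteq I(\F)$ to equality via a fiberwise Nakayama argument, after which you solve the equations $\eta_l(u)=-\theta_l(\hat v)$ explicitly to exhibit a lift $W\in T\F$ of any $v\in T_sS$. What your route buys is a constructive local normal form for the unfolding near a bitransverse point --- essentially the global analogue of the affine example in Section 2 of the paper, with the $\theta_l$ playing the role of the family $\~{Y}$ --- and a clean separation of where isotriviality versus non-singularity enter; what the paper's route buys is brevity, since the dimension count avoids choosing frames and lifts altogether. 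The delicate point you flag at the end is real but is shared equally by the paper: the paper silently asserts that off $\sing(\F)$ the fiber sequence is exact and that off the family's singular locus $(N_S\F)_p$ is free, which is exactly the implication ``$\mathcal{E}xt^1(N\F,\O)_p=0$ implies $N\F_p$ free'' that you need for $I(\F)=\mathcal{H}om(N\F,\O_{X\times S})$ to be a rank-$q$ subbundle; both proofs stand or fall together on this point, which is the content drawn from \cite{paper1}. One small thing to make explicit in your write-up: the claim that $N\F$ has rank $q$ (equal to the rank of $N\F_0$) uses the unfolding condition $\dim\F=\dim\F_0+\dim S$, so that condition is not actually absent from your argument --- it is hidden in the rank bookkeeping.
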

\begin{proof}
As $\F$ is an isotrivial unfolding $\sing(\F_0)\times S$ is the singular locus of the (trivial) family of involutive distributions induced by $\F$. So let $p\notin \sing(\F_0)\times S$, then  the localization $(N_S\F)_p$ is a free $\O_{X\times S}$-module and the short sequence
\[
0\to T_S\F\otimes k(p)\to T_S(X\times S)\otimes k(p)\to N_S\F\otimes k(p)\to 0
\]
is exact.
If $p\notin \sing(\F)$ then the sequence 
\[
0\to T\F\otimes k(p)\to T(X\times S)\otimes k(p)\to N\F\otimes k(p)\to 0
\]
is exact.
On the other hand we always have an immersion 
\[T_S(X\times S)\otimes k(p) \hookrightarrow T(X\times S)\otimes k(p).\]
Then if $p$ is a point neither in $\sing(\F_0)\times S$ nor in $\sing(\F)$ we have an immersion
\[ T_S\F\otimes k(p)\hookrightarrow T\F\otimes k(p). \]
So we have another immersion
\[ T\F/T_S\F\otimes k(p)\hookrightarrow T_X(X\times S).\]
As $\dim T\F=\dim T\F_0 + \dim S$ the dimension of the above vector spaces are equal to $\dim S$, so 
\[ N\F/N_S\F \otimes k(p) =0.\]
Then, if $p\notin \sing(\F_0)\times S$ and $p\notin \sing(\F)$, the point $p$ is not in $\mathscr{T}_\F$. 
\end{proof}

\begin{lema}
Let $X$ and $S$ be non-singular varieties over $\C$, denote $\pi_1$, $\pi_2$ the projections of $X\times S$ to the first and second factor, respectively.
Let $\F$ be an isotrivial unfolding of a foliation $\F_0$ on $X$ parametrized by $S$. 
Suppose that $\F_0$ is non-singular and that $\dim(\sing(\F))\leq \dim(\F)-1$.
Then $\pi_2|_{\sing(\F)}: \sing(\F)\to S$ is \emph{not} dominant.
\end{lema}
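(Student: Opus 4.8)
The plan is to argue by contradiction. Suppose $\pi_2|_{\sing(\F)}$ is dominant; then a general closed point $s\in S$ lies in the image, so the slice $X\times\{s\}$ meets $\sing(\F)$. Since $\pi_2|_{\sing(\F)}$ fails to be dominant precisely when a general slice is disjoint from $\sing(\F)$, the lemma reduces to showing that for general $s$ the slice $X\times\{s\}$ misses $\sing(\F)$. The guiding intuition is that an isotrivial unfolding restricts on each slice to the fixed, non-singular foliation $\F_0$.

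First I would extract the rigidity coming from isotriviality and non-singularity of $\F_0$. As $\F$ is isotrivial we have $I_S(\F)=\pi_1^*I(\F_0)$, so the induced family $T_S\F$ is the trivial family, whose singular locus is $\sing(\F_0)\times S$; this is empty since $\F_0$ is non-singular. Consequently $N_S\F$ is locally free on all of $X\times S$ and the top row of the diagram in \cref{trans} is a short exact sequence of vector bundles. Using the inclusion $N_S\F\hookrightarrow N\F$ and local freeness of $N_S\F$, one sees that away from $\mathscr{T}_\F$ the quotient $N\F/N_S\F$ vanishes, so $N\F\cong N_S\F$ is locally free there; I would record as a preliminary observation that $\sing(\F)\subseteq\mathscr{T}_\F$. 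Next I would pass to a general slice: for every closed point $s$ the pullback $\iota_s^*\F$ is computed from $\iota_s^*I(\F)$, and since this factors through the relative differentials, isotriviality gives $\iota_s^*\F=\F_0$, hence $\sing(\iota_s^*\F)=\sing(\F_0)=\emptyset$.

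The crux is then to compare $\sing(\F)\cap(X\times\{s\})$ with $\sing(\iota_s^*\F)$ for general $s$. Working over $\C$, I would invoke generic flatness, so that over a dense open $S^\circ\subseteq S$ the sheaves $N\F$, $N_S\F$ and $\mathcal{E}xt^1(N\F,\O_{X\times S})$ are flat and Tor-independent of the fibres of $\pi_2$, together with generic smoothness of $\pi_2|_{\sing(\F)}$; here the hypothesis $\dim(\sing(\F))\le\dim(\F)-1$ serves to keep $\sing(\F)$ thin enough that a general slice is Tor-independent from it and the formation of the defining $\mathcal{E}xt$ sheaf commutes with restriction to the fibre. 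The target is that for $s\in S^\circ$ the restriction $\iota_s^*N\F$ is locally free on $X\times\{s\}$, equivalently $\sing(\F)\cap(X\times\{s\})=\emptyset$, which forces $\pi_2(\sing(\F))$ to omit the dense open $S^\circ$ and contradicts dominance.

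The main obstacle is exactly this comparison. The difficulty is that the restriction $\iota_s^*N\F$ of the absolute normal sheaf is \emph{not} the normal sheaf $N(\iota_s^*\F)$ of the restricted foliation: the former still carries the $S$-directions, and the transversality-defect locus $\mathscr{T}_\F$ typically does dominate $S$, so a general slice genuinely meets $\mathscr{T}_\F$ even though I claim it misses the smaller scheme $\sing(\F)$. Hence one cannot naively commute $\mathcal{E}xt^1(-,\O_{X\times S})$ with restriction to the slice. The way through is to use the local freeness of $N_S\F$ established above to split the relative directions off from $N\F$, reducing the local computation of $\mathcal{E}xt^1(N\F,\O_{X\times S})$ near a point of $\sing(\F)$ to the transversal factor, and then to use the dimension bound and generic flatness to guarantee that this reduction is compatible with restriction to a general fibre. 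I expect that securing this compatibility, rather than any of the surrounding bookkeeping, is where the real work lies; once it is in place the contradiction is immediate.
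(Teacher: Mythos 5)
Your opening reduction is sound but, at the decisive point, circular: ``for general $s$ the slice $X\times\{s\}$ misses $\sing(\F)$'' is literally a restatement of non-dominance, so everything rests on the comparison step that you yourself flag as unresolved, and the tools you propose for it (generic flatness, Tor-independence, commuting $\mathcal{E}xt^1(-,\O_{X\times S})$ with restriction to a fibre) cannot close it. Concretely, nothing in your toolkit excludes the scenario in which $\sing(\F)$ is a curve $C\subset X\times S$, finite and dominant over a one-dimensional $S$, with $\dim \F\geq 2$: then $\dim C=1\leq \dim\F-1$, so the dimension hypothesis is satisfied, every slice over a dense open $S^\circ$ meets $C$, and all of your generic statements can hold simultaneously — flatness of $N\F$ over $S^\circ$ in no way prevents the support of $\mathcal{E}xt^1(N\F,\O_{X\times S})$ from surjecting onto $S$ (think of a sheaf with torsion along a multisection; it can be flat over the base while its Ext-support dominates it). The hypothesis $\dim(\sing(\F))\leq\dim(\F)-1$ does not ``keep $\sing(\F)$ thin enough'' in the sense you need: thinness of that kind bounds the dimension of the intersection with a general slice, never its emptiness. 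Your preliminary observations ($N_S\F$ locally free, $\sing(\F)\subseteq\mathscr{T}_\F$, $\iota_s^*\F=\F_0$) are correct but do not touch this obstruction.

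The missing idea — and the one the paper's proof actually uses — is that the singular set of a holomorphic, \emph{integrable} foliation is invariant under the foliation: by Theorem 2.7 of \cite{Suwasing}, at a regular point $p$ of $\sing(\F)^{\mathrm{red}}$ the image of $T\F\otimes k(p)\to T(X\times S)\otimes k(p)$ lies inside the tangent space of $\sing(\F)^{\mathrm{red}}$ at $p$. Isotriviality gives $T_S\F\otimes k(p)\cong T\F_0\otimes k(\pi_1(p))$, and non-singularity of $\F_0$ makes this a subspace of \emph{vertical} vectors (killed by $D\pi_2$) of full dimension $\dim\F_0$ at \emph{every} point. If $\sing(\F)$ dominated $S$, then at a general regular point of $\sing(\F)^{\mathrm{red}}$ the tangent space would in addition surject onto $T_sS$ under $D\pi_2$, hence have dimension at least $\dim\F_0+\dim S=\dim\F$, contradicting $\dim(\sing(\F))\leq\dim(\F)-1$. (In the multisection scenario above this is vivid: $T_pC$ is one-dimensional and surjects onto $T_sS$, so it contains no nonzero vertical vector, contradicting the invariance of $C$ under $T_S\F$.) Note that your proposal never invokes involutivity at all; since the lemma's content enters precisely through the Lie bracket via Suwa's structure theorem, a purely homological route that ignores the bracket cannot succeed, however the flatness bookkeeping is arranged.
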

\begin{proof}
This follows from Theorem 2.7 of \cite{Suwasing}, actually we will use the following weaker version of the theorem in \cite{Suwasing}:
\par  If we take the reduced structure $\sing(\F)^{\text{red}}\subseteq \sing(\F)$
then, at a regular point $p$ of $\sing(\F)^{\text{red}}$, the image of the map
\[ T\F\otimes k(p)\to T(X\times S)\otimes k(p) \]
falls within the tangent space to $\sing(\F)^{\text{red}}$ at $p$.
\par To prove our assertion suppose that $\sing(\F)$ is dominant over $S$.
 Take $p$ to be a regular point of  $\sing(\F)^{\text{red}}$.
Then $\sing(\F)^{\text{red}}$ is dominant over $S$ as well, so it has dimension at least that of $S$.
On the other hand by Theorem 2.7 of \cite{Suwasing} the image of 
\[(T\F_0)\otimes k(\pi_1(p))\cong \pi_1^*(T\F_0)\otimes k(p)\cong T_S\F\otimes k(p)\to T(X\times S)\otimes k(p) \]
falls within the tangent space to $\sing(\F)^{\text{red}}$ at $p$.
And, as $\F_0$ is a non-singular foliation, we have that the dimension of that image is that of the leaves of $T\F_0$.
As $T_S\F$ is tangent to the fibers of $\pi_2$ the differential of the projection $D\pi_2$ satisfy
\[ D\pi_2(T_S\F)=0. \]

Then, comparing dimensions of tangent spaces we get
\[ \dim(\sing(\F))\geq \dim \F_0+\dim S = \dim \F, \]
obtaining a contradiction, so $\sing(\F)$ cannot be dominant over $S$.
\end{proof}

Putting together the last two lemmas we obtain the following.

\begin{prop}\label{proptrans}
Let $\F$ be an isotrivial unfolding  of a foliation $\F_0$  on a non-singular variety $X$ parametrized by a non-singular variety $S$, such that $\dim(\sing(\F))\leq {\dim(\F)-1}$. 
Set $Y=X\setminus \sing(\F_0)$.
Then there is an open set $U\subset S$ such that the restriction of $\F$ to $Y\times U$ is a \emph{transversal} isotrivial unfolding.
\end{prop}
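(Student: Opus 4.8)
The plan is to combine the two preceding lemmas: use the second one to cut the parameter space down to an open set over which $\F$ has empty singular locus, and then use the first one to deduce that transversality holds automatically there. Throughout I would restrict attention to $Y\times S$, where $Y=X\setminus\sing(\F_0)$, since on this open subscheme $\F_0|_Y$ is non-singular, which is exactly the extra hypothesis the second lemma demands.

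First I would check that the hypotheses survive restriction to $Y\times S$. Both isotriviality (the equality $I_S(\F)=\pi_1^*I(\F_0)$) and the unfolding property are local and hence preserved on open sets, so $\F|_{Y\times S}$ is an isotrivial unfolding of the non-singular foliation $\F_0|_Y$. Moreover $\sing(\F|_{Y\times S})=\sing(\F)\cap(Y\times S)$, so the dimension bound persists: $\dim\sing(\F|_{Y\times S})\le\dim\sing(\F)\le\dim\F-1$. These are precisely the hypotheses of the second lemma, applied with $Y$ in the role of $X$, and it yields that $\pi_2|_{\sing(\F|_{Y\times S})}$ is not dominant. Since the image of a morphism is constructible, its failure to be dense means its Zariski closure is a proper closed subset of $S$; I would then set $U := S\setminus\overline{\pi_2\bigl(\sing(\F|_{Y\times S})\bigr)}$, a non-empty open subset. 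By construction no point of $\sing(\F)\cap(Y\times S)$ lies over $U$, so $\sing(\F|_{Y\times U})=\emptyset$, i.e. $\F$ is non-singular on $Y\times U$.

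Finally, this non-singularity makes the hypothesis $\mathscr{T}_\F\cap\sing(\F)=\emptyset$ of the first lemma vacuously true over $Y\times U$. Applying that lemma to the isotrivial unfolding $\F|_{Y\times U}$ gives $\mathscr{T}_{\F|_{Y\times U}}\subseteq\sing(\F_0|_Y)\times U$; but $\F_0|_Y$ is non-singular, so $\sing(\F_0|_Y)=\emptyset$ and hence $\mathscr{T}_{\F|_{Y\times U}}=\emptyset$, which is exactly the assertion that $\F|_{Y\times U}$ is transversal. The only real obstacle is bookkeeping and ordering: one must confirm that restriction to $Y\times S$ genuinely preserves isotriviality and the dimension bound on the singular locus, and one must apply the second lemma \emph{before} the first, because the disjointness hypothesis of the first lemma is only attainable once $\sing(\F)$ has been cleared away by passing to $Y\times U$.
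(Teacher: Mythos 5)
Your proof is correct and is precisely the argument the paper intends: the paper offers no written proof beyond ``putting together the last two lemmas,'' and your ordering (second lemma first, to clear $\sing(\F)$ away over an open $U\subset S$, then the first lemma with its disjointness hypothesis rendered vacuous) is exactly how they combine. The bookkeeping you flag --- that isotriviality, the unfolding property, and the bound $\dim\sing(\F|_{Y\times S})\le\dim\F-1$ all survive restriction to the open set $Y\times S$ --- is handled correctly.
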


\section{Unfoldings of Foliations by curves}\label{ufc}
Now we are in conditions of applying the results of \Cref{isotru} to foliations on $\PP^n$.
Let $\F$ be a foliation of degree $d$ in projective space $\PP^n$. 
If the condition 
\[ (n-q-1)(q+1)> q+2\binom{d+q+1}{d}-\binom{d+q}{d-1} \]
is satisfied, we are in the situation of \cref{remfib}.
Then there is an open set $U$ of $\mathrm{Gr}^{n}_{q+1}$ and a projection $p:\PP^n\dashrightarrow \PP^{q+1}$ trivializing the incidence correspondence, in such a way that we get a diagram
\[
\xymatrix{
 & \PP^{q+1}\times U \ar[dl]_{\pi_1}\ar[dr]^{\pi_2} & \\
\PP^n&  &U.
}
\]
And through each point in $U$ passes a closed subscheme $V\subseteq U$ such that $\pi^*\F|_{\PP^{q+1}\times V}$ is an isotrivial unfolding of a foliation by curves $\F_0$ in $\PP^{q+1}$ parametrized by $V$.
Indeed, we can get $\F_0$ by taking a plane $P$ representing a point in $V$ and so we have $\F_0=p(\F|_P)$.
We can further restrict $V$ to its reduced structure $V^{\mathrm{red}}$ and then even more to the non-empty open set of regular points of $V^{\mathrm{red}}$ in order to be able to apply \cref{proptrans}.
By \cref{proptrans}, there is an open subset $Y$ of $\PP^{q+1}$ and an open subset $W$ of $V^{\mathrm{red}}$ such that the restriction of $\pi^*\F|_{\PP^{q+1}\times V}$ to $Y\times W$ is a \emph{transversal and isotrivial} unfolding of $\F_0|_Y$.
Now, we have two possibilities, either $\F_0$ have rational infinitesimal symmetries or not.

\par If $\F_0$ have no rational symmetries, then the sheaf $\mathfrak{aut}(\F_0)$ is trivial. 
Then, because of the short exact sequence
\[ 0\to T\F_0\to \mathfrak{aut}(\F_0)\to \mathfrak{u}(\F_0)\to 0, \]
 also $\mathfrak{u}(\F_0)=0$.
As was said before, by \cref{proptrans} we can restrict the unfolding $\pi^*\F|_{\PP^{q+1}\times V}$ to $Y\times W$ in such a way that the unfolding is now transversal and isotrivial on $Y\times W$.
Note that  $\mathfrak{u}(\F_0|_Y)= \mathfrak{u}(\F_0)|_Y=0$. 
So the unfolding is trivial and thus $\pi^*\F|_{\PP^{q+1}\times V}$ is birationally equivalent to the pull-back of a foliation by curves on $\PP^{q+1}$.

Notice also that foliations with no rational symmetries form a dense open set on  $ \PP H^0(\PP^{q+1}, T\PP^{q+1}(d-1))$, so either a dense open set of $P\in U$ verify that $\F|_P$ have no rational symmetries or, on the contrary, every $P\in U$ is such that $\F|_P$ have a rational symmetry.

Summarizing we have proved the following.
\begin{prop}\label{casiahi}
Let $\F$ be a degree $d$ foliation in $\PP^n$ of codimension $q$.
Suppose the condition 
\[ (n-q-1)(q+1)> (q+2)\binom{d+q+1}{d}-\binom{d+q}{d-1} \]
is satisfied.
Then there is an open set $U\subset \mathrm{Gr}^{n}_{q+1}$ and a rational morphism 
\[
 \PP^{q+1}\times U\stackrel{\pi}{\to} \PP^n
\]
such that the following alternative holds.
Either one have that for every $q+1$-linear subspace $P\in U$, the restriction $\F|_P$ have rational symmetries; or there is, for each $P$ in a dense open subset of $U$, a subvariety $V_P\subseteq  U$ of codimension at most $ (q+2)\binom{d+q+1}{d}-\binom{d+q}{d-1}$ such that $\pi^*\F|_{\PP^{q+1}\times V_P}$ is the pull-back of a foliation by curves on $\PP^{q+1}$. 
\end{prop}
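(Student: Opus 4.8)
The plan is to assemble the constructions of Sections \ref{isotru}--\ref{gentrans}; the statement is essentially a packaging of the running argument, so the real work is in verifying the hypotheses of each cited result and in extracting the two alternatives cleanly. First I would use the numerical hypothesis, which is precisely the inequality $\dim \mathrm{Gr}^{n}_{q+1} > \dim \PP H^0(\PP^{q+1},T\PP^{q+1}(d-1))$, to place us in the situation of \cref{remfib}. Fixing a generic linear projection $p$ that trivializes the incidence correspondence produces the open set $U\subseteq \mathrm{Gr}^{n}_{q+1}$ together with the projections from $\PP^{q+1}\times U$, and the classifying morphism $\phi_\F$ then has positive-dimensional fibers. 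For each $P\in U$ the fiber $V_P:=\phi_\F^{-1}(\phi_\F(P))$ is a subvariety of codimension at most $\dim \PP H^0(\PP^{q+1},T\PP^{q+1}(d-1)) = (q+2)\binom{d+q+1}{d}-\binom{d+q}{d-1}$, and by the universal property of the moduli space the family $F_U|_{V_P}$ is trivial. Hence $\pi^*\F|_{\PP^{q+1}\times V_P}$ is an isotrivial unfolding of the foliation $\F_0 = p(\F|_P)$, which is a foliation by curves on $\PP^{q+1}$ since restricting a codimension-$q$ foliation to a $(q+1)$-plane leaves dimension one.

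Next I would arrange transversality, which is where the genuine technical content lies: \cref{correspondencia} is stated only for transversal unfoldings, and isotriviality by itself does not supply transversality. The plan is to replace $V_P$ by its reduced structure and then by the smooth locus of that, so that the parameter space becomes a nonsingular variety, and then to invoke \cref{proptrans}. Applying that proposition produces an open set $Y\subseteq \PP^{q+1}$ and an open set $W\subseteq V_P^{\mathrm{red}}$ over which the restricted unfolding of $\F_0|_Y$ is both transversal and isotrivial.

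With transversality secured, the dichotomy is governed by the invariant $\Upsilon(\F_0)$. If $\F_0$ carries no rational infinitesimal symmetry, then $\mathfrak{aut}(\F_0)=0$, and the short exact sequence
\[ 0\to T\F_0\to \mathfrak{aut}(\F_0)\to \mathfrak{u}(\F_0)\to 0 \]
forces $\mathfrak{u}(\F_0)=0$, whence $\Upsilon(\F_0|_Y)=H^0(Y,\mathfrak{u}(\F_0|_Y))=0$. By \cref{correspondencia} the isotrivial transversal unfoldings of $\F_0|_Y$ parametrized by $W$ are classified by Maurer--Cartan sections in $H^0(W,\Omega^1_W)\otimes \Upsilon(\F_0|_Y)$; since this tensor factor vanishes, the only such unfolding is the trivial one. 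Therefore $\pi^*\F|_{\PP^{q+1}\times V_P}$ is birationally the pull-back $\pi^*\F_0$ of a foliation by curves on $\PP^{q+1}$, which is the second alternative.

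Finally I would globalize over $U$ by a genericity argument: the locus of degree-$d$ foliations by curves on $\PP^{q+1}$ admitting a rational symmetry is a proper closed subset of $\PP H^0(\PP^{q+1},T\PP^{q+1}(d-1))$, so its complement is dense and open. Pulling this back along $\phi_\F$ gives the clean dichotomy --- either the image of $\phi_\F$ meets the complement, in which case a dense open subset of $P\in U$ has $\F|_P$ without rational symmetry and the previous paragraph applies, or the image lies entirely in the closed locus, in which case every $\F|_P$ has a rational symmetry. I expect the main obstacle to be the transversality step: one must check that passing to $V_P^{\mathrm{red}}$ and to its smooth locus keeps us within the hypotheses of \cref{proptrans} (in particular the condition $\dim(\sing(\F))\le \dim(\F)-1$), and that the triviality obtained on $Y\times W$ genuinely upgrades to the asserted birational statement about $\pi^*\F$ on all of $\PP^{q+1}\times V_P$.
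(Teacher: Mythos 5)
Your proposal follows essentially the same route as the paper's: the numerical hypothesis places you in the situation of \cref{remfib}, the fibers $V_P$ of the classifying map $\phi_\F$ give isotrivial unfoldings with the stated codimension bound, \cref{proptrans} (after passing to the smooth locus of $V_P^{\mathrm{red}}$) yields transversality on $Y\times W$, the exact sequence $0\to T\F_0\to \mathfrak{aut}(\F_0)\to \mathfrak{u}(\F_0)\to 0$ forces $\Upsilon(\F_0|_Y)=0$ in the symmetry-free case so the unfolding is trivial, and the dichotomy via the dense open locus of symmetry-free foliations in $\PP H^0(\PP^{q+1}, T\PP^{q+1}(d-1))$ is exactly the paper's closing step. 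One small correction of wording: ``no rational symmetries'' should be read as $\mathfrak{aut}(\F_0)=T\F_0$ rather than $\mathfrak{aut}(\F_0)=0$ (which would absurdly force $T\F_0=0$); with that reading your use of the exact sequence to conclude $\mathfrak{u}(\F_0)=0$ coincides with the paper's argument.
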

In codimension $1$ we can improve this result.

\begin{teo}\label{teopapa}
Let  $\F$ be a foliation of codimension $1$ and degree $d$ on $\PP^n(\C)$.
Suppose $\dim(\sing(\F))\leq n-2$ and the condition
\[
 2(n-2)> 3\binom{d+2}{d}-\binom{d+1}{d-1}
\]
is satisfied.
Then we have the following alternative:
\begin{enumerate}
\item Either there exist  an open subset $U$ of $\mathrm{Gr}_2^{q+1}$, and a map $\pi:U\times\PP^{2}\to \PP^n$ such that for each $P\in U$ there is a subvariety $V_P\subseteq U$ containing $P$ and of codimension at most $3\binom{d+2}{d}-\binom{d+1}{d-1}$ such that $\pi^*\F|_{\PP^2\times V_P}$ is pull-back of a foliation by curves on $\PP^{2}$. 
\item  Or there are holomorphic varieties $\~{S}$ and $\~{Y}$ and a meromorphic map with discrete (not necessarily finite) generic fiber, $\phi:\~{S}\times\PP^{2}\to \PP^n$,  such that $\phi^*\F$ has a meromorphic first integral.
\end{enumerate}
\end{teo}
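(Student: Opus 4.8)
The plan is to recognize the hypotheses as the codimension-one instance of \cref{casiahi} and then to upgrade its weaker ``rational symmetry'' alternative into the sharper conclusion (2) by exploiting the explicit structure of foliations by curves on $\PP^2$ that carry a symmetry. First I would set $q=1$, so that $\PP^{q+1}=\PP^2$ and the numerical hypothesis $2(n-2)>3\binom{d+2}{d}-\binom{d+1}{d-1}$ is exactly the condition of \cref{casiahi}, while $\dim\sing(\F)\le n-2=\dim\F-1$ is precisely what \cref{proptrans} requires. Applying \cref{casiahi} yields the dichotomy: either a dense open set of planes $P\in U$ has $\F|_P$ without rational symmetries, or every $\F|_P$ has a rational symmetry.

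In the first case the second bullet of \cref{casiahi} produces, for $P$ in a dense open subset, a subvariety $V_P\subseteq U$ of codimension at most $3\binom{d+2}{d}-\binom{d+1}{d-1}$ with $\pi^*\F|_{\PP^2\times V_P}$ a pull-back of a foliation by curves on $\PP^2$; this is verbatim alternative (1). So the real work is the symmetry case. There I would set $\F_0:=\F|_P$, a foliation by curves on $\PP^2$ with $\mathfrak{aut}(\F_0)\ne 0$. Since $T\F_0$ is a line bundle we have $H^0(\PP^2,T\F_0)=H^1(\PP^2,T\F_0)=0$, so \cref{unfmc} gives $\Upsilon(\F_0)=\mathrm{Lie}(\mathrm{Aut}(\F_0))$, and by \cref{exP2} this Lie algebra is one-dimensional with $\mathrm{Aut}(\F_0)^0\cong\C^*$, generated by a symmetry $Y$ for which $\omega(Y)$ is a rational integrating factor of the defining form $\omega$.

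Fixing a fibre $V$ of $\phi_\F$ (positive dimensional by \cref{remfib}, and of dimension $>n-2$ by the numerical bound), the foliation $\pi^*\F|_{\PP^2\times V}$ is an isotrivial unfolding of $\F_0$; shrinking $\PP^2$ and $V$ via \cref{proptrans} makes it transversal, so \cref{correspondencia} attaches to it a Maurer--Cartan form $\upsilon_\F\in H^0(\Omega^1_V)\otimes\Upsilon(\F_0)$. Then \cref{darboux} provides, after passing to the universal cover $\pi:\~S\to V$, a morphism $f:\~S\to\mathrm{Aut}(\F_0)$ with $f^*\F_{MC}=\pi^*\F$, where $\F_{MC}$ is the unfolding of \cref{exP2}. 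Its defining form $\varpi=\omega+\omega(Y)\,dz$ has the integrating factor $\omega(Y)$, so the closed meromorphic form $\omega/\omega(Y)+dz$ admits a primitive on the universal cover $\~Y$ of $\PP^2$ minus the polar set of the factor; this primitive is a meromorphic first integral of $\F_{MC}$, and pulling it back by $f$ yields one for $\pi^*\F=f^*\F_{MC}$. Finally I would cut $V$ down to a general $(n-2)$-dimensional subvariety $S'$ through $P$, which keeps the family trivial, hence isotrivial, and makes $\pi_1|_{\PP^2\times S'}\colon\PP^2\times S'\to\PP^n$ generically finite; taking $\~S$ to be the universal cover of a transversal open piece of $S'$ and $\phi$ the induced map $\~S\times\PP^2\to\PP^n$, the map has discrete generic fibre and $\phi^*\F$ carries the first integral just constructed.

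The hard part will be this last, globalizing step: reconciling the need for $\PP^2$ complete (to invoke the cohomology vanishing, and hence \cref{exP2} and \cref{darboux}) with the need to pass to the transversal open locus of \cref{proptrans}, and then controlling the monodromy of the integrating factor so that the resulting object is genuinely a meromorphic first integral on an honest cover $\~Y\times\~S$. One must also verify that trimming $V$ to dimension $n-2$ destroys neither transversality nor isotriviality while producing a generically finite $\phi$; here the estimate $\dim V>n-2$, which follows from the numerical hypothesis together with $\dim\mathrm{Gr}^n_2=3(n-2)$, is exactly what guarantees that such a trimming, and hence the discreteness of the generic fibre, is available.
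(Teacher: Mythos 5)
Your reduction to \cref{casiahi} and your handling of alternative (1) match the paper exactly, but your treatment of the symmetry case has a genuine gap: the second alternative of \cref{casiahi} only says that each restriction $\F|_P$ admits a \emph{rational} symmetry, i.e.\ a section of $\mathfrak{aut}(\F|_P)$ over some open subset of $P$. This does \emph{not} give $H^0(\PP^2,\mathfrak{aut}(\F_0))\neq 0$: a foliation can carry rational symmetries while $\mathrm{Aut}(\F_0)$ is finite, and indeed this is the generic situation. So \cref{exP2}, whose hypothesis is precisely $H^0(X,\mathfrak{aut}(\F))\neq 0$, is not available, there is no one-parameter group $\mathrm{Aut}(\F_0)^0\cong\C^*$, and \cref{darboux} cannot be invoked. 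The difficulty you flag at the end is not a deferred technicality but exactly where the argument collapses: \cref{darboux} requires the identification $\Upsilon(\F_0)=\mathrm{Lie}(\mathrm{Aut}(\F_0))$ of \cref{unfmc}, which uses $H^0(X,T\F_0)=H^1(X,T\F_0)=0$ on the \emph{complete} $\PP^2$, whereas \cref{proptrans} forces you onto a non-complete open set $Y\times W$, where $\Upsilon$ of the restriction is governed by rational (not global) symmetries and the whole Maurer--Cartan mechanism with a finite-dimensional group is lost.

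The paper's proof routes around this. First it converts the rational symmetry of $\F|_P$ into a rational \emph{integrating factor} $f$, via the theorem of \cite{jvp} quoted in the remark after \cref{exP2} (valid for codimension-one foliations on a complete variety). Then, by \cite{cerveau-mattei}, the closed rational form $\omega/f$ acquires a first integral after pulling back by the universal covering $p:\~{Y}\to P\setminus \mathrm{Div}(f)$. Finally, the pulled-back unfolding $(p\times\mathrm{id})^*\phi^*\F$ is a transversal isotrivial unfolding of a foliation with a first integral on a simply connected space, and \cite[5.3]{suwa} yields a local first integral of the unfolding itself, which globalizes by simple connectedness of $\~{Y}$. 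Note that no automorphism group appears anywhere in this chain. Your closing computation (a primitive of $\omega/\omega(Y)+dz$ on a cover) is the correct intuition behind \cref{exP2}, but without a global symmetry $Y$ it must be replaced by the jvp/Cerveau--Matt\'ei/Suwa argument; in particular Suwa's unfolding theorem, which you never invoke, is the step that actually produces the meromorphic first integral of $\phi^*\F$ in conclusion (2).
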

\begin{proof}
The inequality in the hypotheses allow us to apply \cref{casiahi}.
Then, either we have the map $\pi$, and subvarieties $V_P$ as in \cref{casiahi} or we have that for every $2$-dimensional linear subspace $P\subset \PP^n$ the restriction $\F|_P$ is a foliation with rational symmetries.

\medskip
In the second case $\F|_P$, has a rational integrating factor $f$.
Then, by \cite{cerveau-mattei}, if we take $p:\~{Y}\to P\setminus \mathrm{Div}(f)$ to be the universal covering map, $p^*\F|_P$ has 
a first integral.
\par Then $(p\times \mathrm{id})^* \phi^* \F$ is a transveral isotrivial unfolding of a foliation over $\~{Y}$ parametrized by $S$.
It is an unfolding of a foliation with a first integral. 
As  $(p\times \mathrm{id})^* \phi^* \F$ is the unfolding of a foliation with a first integral defined on a simply connected space, then by \cite[5.3]{suwa}, $(p\times \mathrm{id})^* \phi^* \F$ has itself a first integral (Suwa's original result implies the existence of a local first integral, which we can extend globally on account of $Y$ being simply connected).
\end{proof}
We can express this result more succinctly as follows.

\begin{cor}\label{coropapa}
Let $\F$ be a foliation in $\PP^n(\C)$ satisfying the hypotheses of \cref{teopapa}.
Then either $\F$ is given by a closed rational form or a generic leaf of $\F$ contains algebraic varieties of codimension at most $3\binom{d+2}{d}-\binom{d+1}{d-1}$.
\end{cor}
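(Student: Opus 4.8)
The plan is to read the Corollary straight off the two alternatives of \cref{teopapa}: the first alternative produces the algebraic subvarieties sitting inside the leaves, and the second produces the closed rational form. Since nothing beyond \cref{teopapa} is needed, the proof is essentially a translation, which I would organize as two short implications.

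First I would treat alternative (1) of \cref{teopapa}. There $\pi^*\F|_{\PP^2\times V_P}$ is the pull-back of a foliation by curves $\F_0$ on $\PP^2$ along the first projection, so its leaves are exactly the products $C\times V_P$, with $C$ ranging over the one-dimensional leaves of $\F_0$. As $\pi^*\F|_{\PP^2\times V_P}$ maps into $\F$, each such product is carried by $\pi$ into a single leaf $L$ of $\F$. Fixing $x\in C$, the slice $\pi(\{x\}\times V_P)$ is then an algebraic subvariety of $\PP^n$ entirely contained in $L$; concretely it is the set of intersection points $\{P'\cap p^{-1}(x)\ :\ P'\in V_P\}$. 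I would then bound its dimension below using $\dim V_P\geq \dim U-(3\binom{d+2}{d}-\binom{d+1}{d-1})$ together with $\dim U=\dim\mathrm{Gr}^n_2$, and conclude that its codimension inside $L$ is at most $3\binom{d+2}{d}-\binom{d+1}{d-1}$. This yields the \emph{``generic leaf contains algebraic varieties''} branch.

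Next I would treat alternative (2). Here one has a meromorphic, generically discrete-fibred map $\phi:\~S\times\PP^2\to\PP^n$ for which $\phi^*\F$ carries a meromorphic first integral $H$, so that $\phi^*\F=\ker dH$ is defined by the closed form $dH$ upstairs. The task is to descend this to $\F$. Since $\phi$ is dominant with discrete generic fibres it is generically finite onto $\PP^n$, and $\phi^*\F$ is already a pull-back; I would use the action of the deck group of the covering on $H$ (the first integral being defined only up to the transverse symmetries coming from the cover) to manufacture a deck-invariant closed rational $1$-form, which therefore descends to a closed rational $1$-form on $\PP^n$ defining $\F$. Equivalently, the rational integrating factors produced on the planes $\F|_P$ during the proof of \cref{teopapa} assemble into a global rational integrating factor, and dividing the defining form of $\F$ by it gives the desired closed form. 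This yields the \emph{``closed rational form''} branch.

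The main obstacle is the descent in alternative (2): one must control how the possibly infinitely-valued first integral $H$ transforms under the deck group, and verify that—although the generic fibre of $\phi$ is only discrete rather than finite—the resulting closed object is genuinely algebraic on $\PP^n$ and not merely meromorphic on the cover. This is precisely the passage from \emph{integrable on a cover} (Liouvillian-type integrability) to \emph{defined by a closed rational form}, and it is where the affine nature of the transverse symmetry is used. A secondary, purely bookkeeping obstacle is the dimension count in alternative (1): one has to check that $\pi$ restricted to $\{x\}\times V_P$ is generically finite onto its image (and to account for the cap $\dim\bigl(\pi(\{x\}\times V_P)\bigr)\leq n-2$) so that the codimension bound inside the leaf follows cleanly from $\mathrm{codim}_U V_P\leq 3\binom{d+2}{d}-\binom{d+1}{d-1}$.
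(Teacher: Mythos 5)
The paper offers no separate argument for this corollary---it is stated as a direct rereading of \cref{teopapa}, which is exactly your high-level plan---but two of the concrete steps you propose would fail as written. The more serious one is in alternative (2): there is no deck group to work with. The map $\phi:\~{S}\times\PP^2\to\PP^n$ is only a meromorphic map with discrete (possibly infinite) generic fibers; it is not a Galois covering of $\PP^n$, its fibers need not be orbits of any group acting on $\~{S}\times\PP^2$, and so ``averaging'' or twisting the first integral $H$ by deck transformations to produce an invariant closed form is not an available operation. The viable route is the one you relegate to an ``equivalently'' clause, and it is not equivalent but a genuinely different argument: case (2) of the proof of \cref{teopapa} gives, for \emph{every} plane $P$ in the family, a rational integrating factor $f_P$, i.e.\ a closed rational $1$-form $\omega|_P/f_P$ defining $\F|_P$; one then descends this to a closed rational form on $\PP^n$ using the uniqueness of integrating factors up to multiplication by rational first integrals (handling separately the degenerate case where generic $\F|_P$ admits a rational first integral), in the spirit of the results of Cerveau--Matt\'ei and Pereira--S\'anchez that the paper cites. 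That gluing step needs to be carried out; it is the actual content hiding behind the word ``assemble.''

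In alternative (1) your check that $\pi|_{\{x\}\times V_P}$ is \emph{generically finite} is false in the relevant range, and your own observation already contradicts it: the image $\{P'\cap p^{-1}(x)\ :\ P'\in V_P\}$ lies in the linear space $p^{-1}(x)\cong\PP^{n-2}$, while $\dim V_P\geq 3(n-2)-\bigl(3\binom{d+2}{d}-\binom{d+1}{d-1}\bigr)$ typically exceeds $n-2$, so the fibers are positive-dimensional. The correct count replaces finiteness by the observation that each fiber is contained in the family of $2$-planes through a fixed point of $\PP^n$, which has dimension $2(n-2)$; hence the image has dimension at least $\dim V_P-2(n-2)$. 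Note moreover that to land exactly on codimension at most $3\binom{d+2}{d}-\binom{d+1}{d-1}$ inside the $(n-1)$-dimensional leaf, this count requires $\mathrm{codim}_U V_P\leq 3\binom{d+2}{d}-\binom{d+1}{d-1}-1$, which is what one actually gets since $V_P$ is a fiber of the map to $\PP H^0(\PP^2,T\PP^2(d-1))$, a projective space of dimension $h^0-1=3\binom{d+2}{d}-\binom{d+1}{d-1}-1$ (the dimension stated in the paper has an off-by-one); with the cruder bound $\mathrm{codim}_U V_P\leq 3\binom{d+2}{d}-\binom{d+1}{d-1}$ your count only yields codimension at most $3\binom{d+2}{d}-\binom{d+1}{d-1}+1$. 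Both repairs are available, but as proposed the dimension argument does not close.
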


\begin{bibdiv}
\begin{biblist}

\bib{structural}{article}{
  title={A structural theorem for codimension one foliations on $\PP^n$ , $n\geq 3$, with application to degree three foliations},
  author={Cerveau, Dominique}
  author={Lins Neto, Alcides},
  journal={Ann. Sc. Norm. Super. Pisa Cl. Sci.},
  volume={12},
  number={1},
  pages={1--41},
  year={2013}
}

\bib{god-vey}{article}{
  title={Complex codimension one singular foliations and Godbillon-Vey sequences},
  author={Cerveau, Dominique},
  author={Lins Neto, Alcides},
  author={Loray, Frank},
  author={Pereira, Jorge Vitorio},
  author={Touzet, Fr{\'e}d{\'e}ric},
  journal={Moscow mathematical journal},
  volume={7},
  pages={21--54},
  year={2007}
}

\bib{cerveau-mattei}{book}{
  title={Formes int{\'e}grables holomorphes singuli{\`e}res},
  author={Cerveau, Dominique}
  author={Matt{\'e}i, J.F.},
  year={1982},
  publisher={Soci{\'e}t{\'e} Math{\'e}matique de France}
}

\bib{transafin}{article}{
  title={Transversely affine foliations on projective manifolds},
  author={Cousin, Ga{\"e}l },
  author={Pereira, Jorge Vit{\'o}rio},
  journal={arXiv preprint arXiv:1305.2175},
  year={2013}
}

\bib{LPT}{article}{
 title={Singular foliations with trivial canonical class},
  author={Loray, Frank},
  author={Pereira, Jorge Vitorio},
  author={Touzet, Fr{\'e}d{\'e}ric},
  journal={arXiv preprint arXiv:1107.1538},
  year={2011}
}

\bib{jvp}{article}{
  title={Transformation groups of holomorphic foliations},
  author={Pereira, Jorge Vit{\'o}rio},
  author = {S{\'a}nchez, Percy Fern{\'a}ndez},
  journal={Comm. Anal. Geom},
  volume={10},
  number={5},
  pages={1115--1123},
  year={2002}
}

\bib{paper1}{article}{
title={Families of distributions and Pfaff systems under duality},
author={Quallbrunn, F.},
journal={arXiv:1305.3817}
}

\bib{suwa81}{article}{
  title={A theorem of versality for unfoldings of complex analytic foliation singularities},
  author={Suwa, Tatsuo},
  journal={Inventiones mathematicae},
  volume={65},
  number={1},
  pages={29--48},
  year={1981},
  publisher={Springer}
}

\bib{suwa83}{article}{
  title={Unfoldings of complex analytic foliations with singularities},
  author={Suwa, Tatsuo},
  journal={Japanese journal of mathematics. New series},
  volume={9},
  number={1},
  pages={181--206},
  year={1983},
  }

\bib{Suwasing}{article}{
title={Structure of the singular set of a complex analytic foliation},
  author={Suwa, Tatsuo},
  journal={Preprint series in mathematics. Hokkaido University},
  year={1988},
  publisher={Department of Mathematics, Univ. Hokkaido},
 volume={33}
}
\bib{suwa}{article}{
  title={Unfoldings of codimension one complex analytic foliation singularities},
  author={Suwa, Tatsuo},
  year={1992}
}

\end{biblist}
\end{bibdiv}

\end{document}